 \newtheorem{thm}{Theorem}[section]
 \newtheorem{lem}[thm]{Lemma}
 \newtheorem{prop}[thm]{Proposition}
 \theoremstyle{definition}
 \newtheorem{rem}[thm]{Remark}
 \numberwithin{equation}{section}
\newtheorem{theorem}{Theorem}[section]
\theoremstyle{definition}
\theoremstyle{remark}
\newtheorem{remark}[theorem]{Remark}
\begin{document}

\title{On Axially Symmetric Incompressible
Magnetohydrodynamics in Three Dimensions}

\author{Zhen Lei
\footnote{School of Mathematical Sciences; LMNS and Shanghai Key
  Laboratory for Contemporary Applied Mathematics, Fudan
  University, Shanghai 200433, P. R. China. Email: leizhn@gmail.com} }

\date{\today}
\maketitle

\begin{abstract}
In this short article, we prove the global regularity of axially
symmetric solutions to the systems of incompressible ideal magnetohydrodynamics
and resistive magnetohydrodynamics
in three dimensions in the csae that the magnetic fields are purely swirling and perpendicular
to the velocity fields.
\end{abstract}

Keywords: Magnetohydrodynamics, global regularity, axial symmetry.

\section{Introduction}

Magnetohydrodynamics (MHD) is to study the behavior of an
electrically-conducting fluids. Examples of such fluids include
plasmas, liquid metals, salt water, etc. The field of MHD was
initiated by Hannes Alfv${\rm \acute{e}}$n, for which he received
the Nobel Prize in Physics in 1970. However, the mathematical theory
on MHD is still very little known until today.

The fundamental concept behind MHD is that magnetic fields can
induce currents in a moving conductive fluid, which in turn creates
forces on the fluid and also changes the magnetic field itself. MHD
owes its peculiar interest and difficulty to this interaction
between the field and the fluid motion. The set of equations which
describe MHD are a combination of the Navier-Stokes equations of
fluid dynamics and Maxwell's equations of electromagnetism.

Our main result concerns the following incompressible three-dimensional ideal MHD:
\begin{equation}\label{iMHD}
\begin{cases}
\partial_t {\bf u} + ({\bf u} \cdot\nabla) {\bf u} + \nabla
p = \mu\Delta {\bf u} + \frac{1}{\mu_0}(\nabla\times {\bf B})\times {\bf B},\\[-4mm]\\
\partial_t{\bf B}  =  \nabla\times ({\bf u} \times {\bf B}),\\[-4mm]\\
\nabla\cdot {\bf u} =0,\quad \nabla\cdot {\bf B}=0,
\end{cases}
\end{equation}
where ${\bf B}$ denotes the magnetic field,  ${\bf u}$ the bulk
plasma velocity and $p$ the plasma pressure. The magnetic constant $\mu_0$ and the fluid viscosity $\mu$ are both positive.
We will set all the constants to be 1 since they play no role in this paper. The ideal
MHD is used when the electrically-conducting fluid has so little resistivity that it can be treated as a
perfect conductor. This is the limit of infinite magnetic Reynolds
number. For applications of ideal MHD, see, for instance, \cite{CC}.

The following theorem shows that if the magnetic field is purely swirling
and is perpendicular  to the velocity field, then the 3D
incompressible ideal MHD \eqref{iMHD} is globally well-posed in the axially symmetric case.
\begin{thm}\label{thm-iMHD}
Suppose that ${\bf u}_0$ and ${\bf B}_0$ are both axially symmetric
divergence-free vectors with $u_0^\theta = 0$ and $B^r_0 = B^z_0 =
0$. Moreover, we assume that $({\bf u}_0, {\bf B}_0) \in H^2$ and $\frac{B^\theta_0}{r} \in L^\infty$. Then there
exists a unique global solution $({\bf u}, {\bf B})$ for the ideal
MHD \eqref{iMHD} with the initial data $({\bf u}_0, {\bf B}_0)$
which satisfies
\begin{equation}\nonumber
\|{\bf u}(t, \cdot)\|_{H^2}^2 + \|{\bf B}(t, \cdot)\|_{H^2}^2 +
\int_0^t\|\nabla{\bf u}\|_{H^2}^2ds \lesssim
e^{e^{e^{t^{\frac{5}{4}}}}}.
\end{equation}
\end{thm}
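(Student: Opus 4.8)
The plan is to exploit the special geometry to collapse the full MHD system into a scalar transport equation for the magnetic quantity coupled to a forced vorticity equation, and then to propagate local solutions globally by a priori estimates. Writing everything in cylindrical coordinates $(r,\theta,z)$, the hypotheses $u^\theta=0$ and $B^r=B^z=0$ mean $\mathbf u=u^r e_r+u^z e_z$ and $\mathbf B=B^\theta e_\theta$. A direct computation gives the Lorentz force $(\nabla\times\mathbf B)\times\mathbf B=-\frac{(B^\theta)^2}{r}e_r-\frac12\nabla\big((B^\theta)^2\big)$, whose gradient part is absorbed into the pressure, while the induction equation collapses to $\partial_t B^\theta+(\mathbf u\cdot\nabla)B^\theta=\frac{u^r}{r}B^\theta$ with $\mathbf u\cdot\nabla=u^r\partial_r+u^z\partial_z$. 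The decisive algebraic fact is that $\Gamma:=B^\theta/r$ then obeys the pure transport equation $\partial_t\Gamma+(\mathbf u\cdot\nabla)\Gamma=0$. First I would establish local well-posedness in $H^2$ by a standard energy/fixed-point argument, and check that the reduced system propagates the structural constraints ($u^\theta\equiv0$, $B^r\equiv B^z\equiv0$), so that global existence reduces to global-in-time bounds.

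The second step collects the two conserved quantities. Pairing the momentum equation with $\mathbf u$ and the induction equation with $\mathbf B$, the coupling terms cancel through the identity $\int \mathbf u\cdot((\nabla\times\mathbf B)\times\mathbf B)+\int\mathbf B\cdot\nabla\times(\mathbf u\times\mathbf B)=0$, yielding the energy law $\|\mathbf u(t)\|_{L^2}^2+\|\mathbf B(t)\|_{L^2}^2+2\int_0^t\|\nabla\mathbf u\|_{L^2}^2\,ds=\|\mathbf u_0\|_{L^2}^2+\|\mathbf B_0\|_{L^2}^2$. Since $\nabla\cdot\mathbf u=0$, the transport equation for $\Gamma$ conserves every $L^p$ norm, $\|\Gamma(t)\|_{L^p}=\|\Gamma_0\|_{L^p}$; the hypothesis $\Gamma_0\in L^\infty$ thus freezes $\|\Gamma\|_{L^\infty}$ for all time, and interpolating with the $L^2$ bound inherited from the data controls $\|\Gamma\|_{L^4}$ uniformly. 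It is worth emphasizing that the absence of magnetic diffusion, exactly what turns the induction equation into a clean transport equation for $\Gamma$, is what will cause trouble once derivatives are taken.

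The third step is the vorticity estimate. The scalar vorticity $\omega^\theta=\partial_z u^r-\partial_r u^z$ and the combination $\Omega:=\omega^\theta/r$ satisfy $\partial_t\Omega+(\mathbf u\cdot\nabla)\Omega=(\Delta+\frac2r\partial_r)\Omega-\partial_z(\Gamma^2)$, in which the vortex-stretching term has been eliminated and the Lorentz force appears only as the forcing $-\partial_z(\Gamma^2)$. Pairing with $\Omega$, the drift term drops by incompressibility, the operator $\Delta+\frac2r\partial_r$ produces the dissipation $\|\nabla\Omega\|_{L^2}^2$ together with a nonnegative axis contribution, and the forcing is handled via $-\int\Omega\,\partial_z(\Gamma^2)=\int(\partial_z\Omega)\Gamma^2\le\frac12\|\nabla\Omega\|_{L^2}^2+\frac12\|\Gamma\|_{L^4}^4$. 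As $\|\Gamma\|_{L^4}$ is conserved, this gives $\frac{d}{dt}\|\Omega\|_{L^2}^2+\|\nabla\Omega\|_{L^2}^2\le\|\Gamma_0\|_{L^4}^4$, hence $\Omega\in L^\infty_t L^2\cap L^2_t\dot H^1$ with at most linear growth in $t$; inverting the axisymmetric Biot--Savart relation upgrades this to control of $\nabla\mathbf u$, which already suffices for uniqueness.

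The final step is the bootstrap to $H^2$, and here lies the main obstacle. With the velocity controlled, I would differentiate and run coupled energy estimates for $\|\mathbf u\|_{H^2}$ and $\|\mathbf B\|_{H^2}$. Because the induction equation carries no diffusion, $\nabla\mathbf B$ and $\nabla^2\mathbf B$ are governed by transport plus stretching by $\nabla\mathbf u$, so their growth is regulated only through $\int_0^t\|\nabla\mathbf u\|_{L^\infty}\,ds$; this must be extracted from the vorticity bounds by a logarithmic Brezis--Gallouet / Beale--Kato--Majda-type inequality, and it then feeds back into the velocity $H^2$ estimate through the Lorentz force, which is quadratic in $\mathbf B$ and hence needs $\|\mathbf B\|_{H^2}$. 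Closing this loop requires several successive Gronwall arguments, one to pass from the vorticity to $\|\nabla\mathbf u\|_{L^\infty}$, one to bound $\|\mathbf B\|_{H^2}$ in terms of it, and one more to absorb the feedback into the velocity, and each logarithmic/exponential layer compounds the previous one, which is precisely the mechanism producing the iterated-exponential bound $e^{e^{e^{t^{5/4}}}}$. I expect this top-order magnetic estimate, carried out with no smoothing from the induction equation, to be the heart of the argument.
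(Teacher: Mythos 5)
Your reduction to the $(\Omega,\Pi)$ system (you call $B^\theta/r$ by the name $\Gamma$, which the paper reserves for $ru^\theta$) and the resulting bound $\Omega\in L^\infty_tL^2\cap L^2_t\dot H^1$ with linear growth match the paper's first steps exactly. But there is a genuine gap right after that: the linchpin of the paper's argument is Lemma \ref{C-Z}, which converts the bounds on $\Omega$ into
\begin{equation}\nonumber
\int_0^T\big\|r^{-1}u^r(t,\cdot)\big\|_{L^\infty}\,dt\;\lesssim\;\sup_{0\le t\le T}\|\Omega\|_{L^2}^{1/2}\int_0^T\|\nabla\Omega\|_{L^2}^{1/2}\,dt\;\lesssim\;T^{5/4},
\end{equation}
proved by viewing $\partial_r^2+\frac3r\partial_r+\partial_z^2$ as a five-dimensional Laplacian and applying a weighted Calder\'on--Zygmund estimate with the $\mathcal A_p$ weight $r^{-2}$ (or, alternatively, the Shirota--Yanagisawa Biot--Savart bound in Lorentz spaces). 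You never produce this $L^1_t L^\infty_x$ control of $u^r/r$, yet it is indispensable: the quantities $B^\theta$ and $\omega^\theta$ (not just $\Pi$ and $\Omega$) satisfy equations with the stretching terms $\frac{u^r}{r}B^\theta$ and $\frac{u^r}{r}\omega^\theta$, so without it you cannot bound $\|B^\theta\|_{L^\infty}$, cannot close the $L^2$ and $L^4$ estimates for $\omega^\theta$, and in particular your claim that the $\Omega$ bounds ``upgrade to control of $\nabla\mathbf u$'' via Biot--Savart is unjustified ($\omega^\theta=r\Omega$ is not controlled away from the axis by $\Omega\in L^2$ alone). This is also exactly where the exponent $t^{5/4}$ comes from, which your sketch never accounts for.

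Your proposed route for the Lipschitz bound is also not the paper's and, as stated, would not close. A Beale--Kato--Majda or Brezis--Gallouet logarithmic inequality requires $\|\nabla\times\mathbf u\|_{L^\infty}$ (or BMO), but the $\omega^\theta$ equation is forced by $\partial_z(\Pi B^\theta)$, so an $L^\infty$ bound on the vorticity would need $\nabla\mathbf B\in L^\infty$, which in turn needs $\int_0^t\|\nabla\mathbf u\|_{L^\infty}$ --- the loop is circular. The paper breaks this circle by exploiting the parabolic smoothing of the velocity equation alone: it writes $\nabla\times\mathbf u$ in Duhamel form with the magnetic forcing $\partial_z(\Pi B^\theta e_\theta)$, whose amplitude $\Pi B^\theta$ is merely bounded in $L^\infty$ (no derivatives of $\mathbf B$ needed), lets the heat semigroup absorb the $\partial_z$, and obtains $\nabla\mathbf u\in L^4_tL^\infty_x$ via $L^4_tL^{12}_x$ parabolic estimates and Sobolev embedding. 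Only then is $\nabla\mathbf B$ propagated by transport and Gronwall, and the $H^2$ estimate closed. You correctly anticipate that the inviscid induction equation is the obstruction, but the mechanism you propose to overcome it is not viable.
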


The notations used here will be introduced in section 2. Note that the
Faraday's equation for ${\bf B}$ in \eqref{iMHD} is exactly the same
as the vorticity equation for the 3D incompressible Euler equations
(by identifying ${\bf B}$ and $\nabla \times {\bf u}$). This may
lead to an essential difficulties for the global well-posedness of
the ideal MHD \eqref{iMHD} in general case. Indeed, the global
regularity of \eqref{iMHD} is  widely open in the
even two-dimensional case if the magnetic field is non-trivial. We
achieve Theorem \ref{iMHD} by exploring the underlying special
structures of the MHD system in axially symmetric case. The magnetic
stretching term ${\bf B}\cdot\nabla {\bf u}$ in Faraday's equation
can be absorbed into the convection term by dividing the equation by
$r$. This yields that $\Pi = \frac{B^\theta}{r}$ is only transported
by the velocity field ${\bf u}$. On the other hand, by dividing $r$
in the vorticity equation, one can absorb the vortex stretching term
into the convection term, leaving only one term involving $\Pi$ as a
forcing one in $\Omega$ equation. See section 2 and 3 for more details.

Similar result in Theorem \ref{iMHD} is of course expected to hold for the
following resistive MHD:
\begin{equation}\label{RMHD}
\begin{cases}
\partial_t {\bf u} + ({\bf u} \cdot\nabla) {\bf u} + \nabla
p = \Delta {\bf u} + (\nabla\times {\bf B})\times {\bf B},\\[-4mm]\\
\partial_t{\bf B}  =  \nu\Delta {\bf B} + \nabla\times ({\bf u} \times {\bf B}),\\[-4mm]\\
\nabla\cdot {\bf u} =0,\quad \nabla\cdot {\bf B}=0.
\end{cases}
\end{equation}
Again, we will set the resistivity constant $\nu > 0$  to be 1
since it plays no role here. We have the following theorem:
\begin{thm}\label{thm-RMHD}
Suppose that ${\bf u}_0$ and ${\bf B}_0$ are both axially symmetric
divergence-free vectors with $u_0^\theta = 0$ and $B^r_0 = B^z_0 =
0$. Moreover, we assume that $({\bf u}_0, {\bf B}_0) \in H^1$ and
$\frac{B^\theta_0}{r} \in L^\infty$. Then there exists a unique
global solution $({\bf u}, {\bf B})$ for the resistive MHD
\eqref{RMHD} with the initial data $({\bf u}_0, {\bf B}_0)$.
Moreover, $({\bf u}, {\bf B})$ is smooth in the sense that $({\bf
u}(t, \cdot), {\bf B}(t, \cdot)) \in H^s$ for any $s \geq 0$ and $t
> 0$.
\end{thm}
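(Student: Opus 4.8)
The plan is to exploit the same two reduced scalar quantities that already drive the ideal case, now with the benefit of full dissipation in both equations. Writing ${\bf u} = u^r{\bf e}_r + u^z{\bf e}_z$ and ${\bf B} = B^\theta{\bf e}_\theta$, I would first record that in the resistive system the scaled swirl $\Pi = B^\theta/r$ obeys the pure transport--diffusion equation
\[
\partial_t\Pi + {\bf u}\cdot\nabla\Pi = \Big(\Delta + \tfrac{2}{r}\partial_r\Big)\Pi ,
\]
whose right-hand side is exactly the radial part of a five-dimensional Laplacian and carries no zeroth-order term. Since the drift is divergence free and the diffusion operator obeys a maximum principle, one gets at once the a priori bounds $\|\Pi(t)\|_{L^\infty}\le\|\Pi_0\|_{L^\infty}$ together with $\|\Pi(t)\|_{L^2}\le\|\Pi_0\|_{L^2}$ and $\int_0^t\|\nabla\Pi\|_{L^2}^2\,ds\lesssim\|\Pi_0\|_{L^2}^2$. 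The point I would stress is that these bounds are entirely \emph{independent of the size of} ${\bf u}$. The hypotheses $\Pi_0\in L^\infty$ and ${\bf B}_0\in H^1$ (the latter also placing $\Pi_0$ in $L^2$, since for a swirl field $\|\nabla{\bf B}\|_{L^2}^2=\|\nabla B^\theta\|_{L^2}^2+\|B^\theta/r\|_{L^2}^2$) are precisely what make this step available.

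Next I would turn to the vorticity. A direct computation shows that, after dividing the $\omega^\theta$ equation by $r$, the scaled vorticity $\Omega = \omega^\theta/r$ satisfies
\[
\partial_t\Omega + {\bf u}\cdot\nabla\Omega = \Big(\Delta + \tfrac{2}{r}\partial_r\Big)\Omega - 2\,\Pi\,\partial_z\Pi ,
\]
in which the vortex-stretching term has been absorbed into the convection, leaving only the Lorentz contribution $-2\Pi\partial_z\Pi = -\partial_z(\Pi^2)$ as a forcing. Testing against $\Omega$, discarding the nonnegative axis contribution from $\tfrac{2}{r}\partial_r$, and integrating by parts in $z$, the forcing is controlled by $|\int \Pi^2\partial_z\Omega\,dx| \le \|\Pi\|_{L^\infty}\|\Pi\|_{L^2}\|\nabla\Omega\|_{L^2}$, so that
\[
\tfrac12\tfrac{d}{dt}\|\Omega\|_{L^2}^2 + \tfrac12\|\nabla\Omega\|_{L^2}^2 \le \tfrac12\|\Pi_0\|_{L^\infty}^2\|\Pi_0\|_{L^2}^2 .
\]
Because only the a priori controlled quantities of $\Pi$ appear on the right, this yields $\|\Omega(t)\|_{L^2}^2\lesssim \|\Omega(t_0)\|_{L^2}^2 + (t-t_0)$ together with $\nabla\Omega\in L^2_{\mathrm{loc}}([t_0,\infty);L^2)$: the maximum principle for $\Omega$ that drives the swirl-free Navier--Stokes theory is lost, but it is replaced by a bound that grows at most linearly in time and hence stays finite on every bounded interval.

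The remaining, and genuinely more delicate, step is to upgrade these weighted bounds to a full enstrophy bound and thereby to control $\|{\bf u}\|_{H^1}+\|{\bf B}\|_{H^1}$. I would run the standard enstrophy estimate for $\omega^\theta$, in which the dangerous stretching integral $\int (u^r/r)(\omega^\theta)^2\,dx = \int u^r\,\Omega\,\omega^\theta\,dx$ is handled by H\"older and Gagliardo--Nirenberg together with the just-obtained $\Omega\in L^\infty_{\mathrm{loc}}L^2$, while the Lorentz forcing $-2\int r\,\Pi\,\partial_z\Pi\,\omega^\theta\,dx$ is bounded using $\|\Pi\|_{L^\infty}$ and $\|\partial_z B^\theta\|_{L^2}\le\|\nabla{\bf B}\|_{L^2}\in L^2_t$; Gronwall then gives $\omega^\theta\in L^\infty_{\mathrm{loc}}L^2$, i.e. $\nabla{\bf u}\in L^\infty_{\mathrm{loc}}L^2$, and feeding this into the parabolic equation for ${\bf B}$ yields $\nabla{\bf B}\in L^\infty_{\mathrm{loc}}L^2$ as well. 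Since only an $H^1$ datum is assumed, I would first establish local well-posedness in $H^1$ and use the instantaneous parabolic smoothing of the two dissipative equations to guarantee $\Omega(t_0)\in L^2$ for every $t_0>0$; the a priori bounds above, run on $[t_0,T]$, then preclude finite-time blow-up through the standard continuation criterion, and a routine bootstrap promotes the solution to $H^s$ for every $s$ and every $t>0$. The main obstacle is exactly this last coupling step: one must ensure that the magnetic forcing entering the $\Omega$ and $\omega^\theta$ equations does not feed back superlinearly, and this is guaranteed only because the maximum principle for $\Pi$ supplies control of ${\bf B}$ that is independent of the a priori unknown size of the velocity.
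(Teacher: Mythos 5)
Your proposal is correct and follows essentially the same route as the paper: the maximum principle and $L^2$ decay for $\Pi=B^\theta/r$, the transport--diffusion equation for $\Omega=\omega^\theta/r$ forced only by $\partial_z\Pi^2$, then the enstrophy estimate for $\omega^\theta$ and a parabolic bootstrap. The one substantive difference lies in how the $\Omega$ estimate is closed. You bound $\big|\int\Pi^2\partial_z\Omega\,dx\big|\le\|\Pi\|_{L^\infty}\|\Pi\|_{L^2}\|\nabla\Omega\|_{L^2}$, which after Gronwall gives only $\|\Omega(t)\|_{L^2}^2\lesssim \|\Omega(t_0)\|_{L^2}^2+(t-t_0)$. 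The paper instead interpolates $\|\Pi\|_{L^4}^2\le\|\Pi\|_{L^\infty}^{1/3}\|\Pi\|_{L^{10/3}}^{5/3}\lesssim\|\Pi\|_{L^\infty}^{1/3}\|\Pi\|_{L^2}^{2/3}\|\nabla\Pi\|_{L^2}$, so that the Gronwall source is $\|\nabla\Pi\|_{L^2}^2$, which is integrable on all of $[0,\infty)$ by the $\Pi$ energy identity; this upgrades your locally-in-time bounds to the uniform ones $\|\Omega(t)\|_{L^2}\lesssim 1$ and $\int_0^\infty\|\nabla\Omega\|_{L^2}^2\,dt\lesssim 1$, and thence to a uniform bound on $\|\nabla\times{\bf u}(t)\|_{L^2}$. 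Both versions suffice for the theorem as stated, since global regularity only requires finiteness on bounded intervals, but the paper's choice buys genuinely stronger a priori information. A point in your favour: you observe that ${\bf u}_0\in H^1$ does not place $\Omega_0$ in $L^2$ and invoke instantaneous parabolic smoothing to launch the $\Omega$ estimate from a positive time $t_0$; the paper runs the estimate from $t=0$ and leaves this issue implicit.
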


Our motivation of the above results is a novelty observation on the connections between
MHD and axially symmetric Navier-Stokes equations.
If we rewrite the 3D incompressible
axially symmetric Navier-Stokes equations as \eqref{NS-axi}, in terms of ${\bf u} =
u^r{\bf e}_r + u^z{\bf e}_z$ and ${\bf b} = u^\theta {\bf
e}_\theta$, then there is only a sign difference\footnote{In fact,
the pressure is also changed. But the pressure is not a troublesome
term for our purpose due to the divergence-free condition.} between
the Navier-Stokes equations \eqref{NS-axi} for $({\bf u}, {\bf b})$
and the resistive MHD \eqref{RMHD} for $({\bf u}, {\bf B})$ (see
Remark \ref{connection} in section 2 for details). However, this
difference of sign significantly changes the difficulties in solving
3D axially symmetric incompressible equations of MHD.

We remark that the perfect resistive case will be treated in a forthcoming paper \cite{LeiLi}. It is also
interesting to consider the case when $u^\theta = B^\theta = 0$.

Before ending the introduction, let us mention some important
results in the field of incompressible MHD. The local well-posedness
of the resistive MHD \eqref{RMHD} was established in
\cite{SermangeTemam} where the authors also proved the global
well-posedness in 2D case. A nontrivial blowup criterion for the
perfect resistive MHD was established in terms of only
$L^1_t({\rm BMO})$ norm of vorticity of the velocity field in
\cite{LeiZ-2}. Recently, Lin, Xu and Zhang \cite{LXZ} obtained the global well-posedness of classical
solutions for the 2D ideal MHD \eqref{iMHD} under the assumption
that the initial velocity field and the displacement of the magnetic
field from a non-zero constant is sufficiently small in appropriate
Sobolev spaces. Cao and Wu \cite{CWu} proved the global regularity
of 2D resistive MHD with partial viscosity and resistivity (see also \cite{CWY} and the references therein). We also
emphasis the partial regularity theory and Serrin type criterions in
\cite{HeXin1, HeXin2}, and various blowup criterions in \cite{CKS,
CMZ} (see also the reference therein).

The remaining of this paper is simply organized as follows: In
section 2 we will derive the axisymmetric MHD in cylindrical
coordinate. We will also make a comment on the difference between
the resistive MHD \eqref{RMHD} and the axially symmetric
Navier-Stokes equations and prove a maximum principle for $\Pi$. We
will prove Theorem \ref{thm-iMHD} in section 3. Then in section 4 we present the proof of Theorem \ref{RMHD}.

\section{Axially Symmetric MHD and A Maximum Principle}

In this section we will first derive the incompressible axially
symmetric MHD in cylindrical coordinate. Then we show that the
quantity $\Pi$ satisfies a maximum principle. We also present an
interesting connection between the axisymmetric MHD studied in
Theorem \ref{thm-RMHD} and the axisymmetric Navier-Stokes equations
with non-trivial swirl $u^\theta$ (see \eqref{mhd-axi} and
\eqref{NS-axi}).

Let us begin with some notations. A point in $\mathbb{R}^3$ is
denoted by ${\bf x} = (x_1, x_2, z)$. Let $r = \sqrt{x_1^2 + x_2^2}$
and
\begin{equation}\nonumber
{\bf e}_r = \begin{pmatrix}\frac{x_1}{r}\\ \frac{x_2}{r}\\ 0
  \end{pmatrix},
\quad {\bf e}_\theta = \begin{pmatrix}- \frac{x_2}{r}\\ \frac{x_1}{r}\\
0
  \end{pmatrix},
\quad {\bf e}_z = \begin{pmatrix}0\\ 0\\ 1
  \end{pmatrix}
\end{equation}
be the three orthogonal unit vectors along the radial, the angular,
and the axial directions respectively. An axially symmetric solution
to the 3D incompressible MHD \eqref{RMHD} is a solution $({\bf u},
{\bf B}, p)$ which takes the following form
\begin{equation}\nonumber
\begin{cases}
{\bf u}(t, {\bf x}) = u^r(t, r, z){\bf e}_r +  u^\theta(t, r, z){\bf
  e}_\theta + u^z(t, r, z){\bf e}_z,\\[-4mm]\\
{\bf B}(t, {\bf x}) = B^r(t, r, z){\bf e}_r +  B^\theta(t, r, z){\bf
  e}_\theta + B^z(t, r, z){\bf e}_z,\\[-4mm]\\
p(t, {\bf x}) = p(t, r, z).
\end{cases}
\end{equation}
We will also write the vorticity field $\nabla\times {\bf u}$ in
cylindrical coordinate:
\begin{equation}\nonumber
\nabla\times {\bf u}(t, {\bf x}) = \omega^r(t, r, z){\bf e}_r +
\omega^\theta(t, r, z){\bf e}_\theta + \omega^z(t, r, z){\bf e}_z,
\end{equation}
where
\begin{equation}\nonumber
\omega^r = - \partial_zu^\theta,\quad \omega^\theta = \partial_zu^r
- \partial_ru^z,\quad \omega^z = \frac{1}{r}\partial_r(ru^\theta).
\end{equation}
Define
\begin{equation}\label{defn-1}
\Pi = \frac{B^\theta}{r},\quad \Omega =
\frac{\omega^\theta}{r},\quad \Gamma = ru^\theta.
\end{equation}

By expanding the Lorentz force term as
\begin{equation}\nonumber
(\nabla\times {\bf B}) \times {\bf B} = {\bf B}\cdot\nabla {\bf B} -
\nabla\frac{|{\bf B}|^2}{2},
\end{equation}
and then taking the inner product of ${\bf u}$ and ${\bf B}$
equations with ${\bf e_r}$, ${\bf e_\theta}$ and ${\bf e_z}$,
respectively, we can derive the resistive MHD in cylindrical
coordinate:
\begin{equation}\label{mhd-axi-g}
\begin{cases}
\partial_tu^r + u^r\partial_ru^r + u^z\partial_zu^r - \frac{(u^\theta)^2}{r}
  + \partial_rP\\
\quad\quad\quad = \big(\Delta - \frac{1}{r^2}\big)u^r
  + B^r\partial_rB^r + B^z\partial_zB^r - \frac{(B^\theta)^2}{r},\\[-4mm]\\
\partial_tu^\theta + u^r\partial_ru^\theta + u^z\partial_zu^\theta + \frac{u^ru^\theta}{r}\\
\quad\quad\quad  = \big(\Delta - \frac{1}{r^2}\big)u^\theta
  + B^r\partial_rB^\theta + B^z\partial_zB^\theta + \frac{B^rB^\theta}{r},\\[-4mm]\\
\partial_tu^z + u^r\partial_ru^z + u^z\partial_zu^z + \partial_zP\\
\quad\quad\quad\quad\quad = \Delta u^z  + B^r\partial_rB^z + B^z\partial_zB^z,\\[-4mm]\\
\partial_tB^r + u^r\partial_rB^r + u^z\partial_zB^r\\
\quad\quad\quad  = \big(\Delta - \frac{1}{r^2}\big)B^r
  + B^r\partial_ru^r + B^z\partial_zu^r,\\[-4mm]\\
\partial_tB^\theta + u^r\partial_rB^\theta + u^z\partial_zB^\theta + \frac{B^ru^\theta}{r}\\
\quad\quad\quad  = \big(\Delta - \frac{1}{r^2}\big)B^\theta
  + B^r\partial_ru^\theta + B^z\partial_zu^\theta + \frac{u^rB^\theta}{r},\\[-4mm]\\
\partial_tB^z + u^r\partial_rB^z + u^z\partial_zB^z\\
\quad\quad\quad = \Delta B^z  + B^r\partial_ru^z + B^z\partial_zu^z,
\end{cases}
\end{equation}
where the pressure is given by
\begin{equation}\label{p}
P = p + \frac{|{\bf B}|^2}{2}.
\end{equation}
The incompressible
constraints are
\begin{equation}\label{incom-1}
\partial_ru^r + \frac{u^r }{r} + \partial_zu^z = 0,\quad
\partial_rB^r + \frac{B^r }{r} + \partial_zB^z =
0.
\end{equation}

The general axially symmetric resistive MHD is governed by
\eqref{mhd-axi-g} and \eqref{incom-1}. In this paper, we consider a
family of solutions with the form
\begin{equation}\label{axi-solu}
{\bf u}(t, {\bf x}) = u^r(t, r, z){\bf e}_r + u^z(t, r, z){\bf
  e}_z,\quad {\bf B}(t, {\bf x}) = B^\theta(t, r, z){\bf e}_\theta.
\end{equation}
It is easy to check that $(u^\theta, B^r, B^z)$ can be zero for all
time if they are zero initially. In this case, $({\bf u}, {\bf B},
P)$ in \eqref{axi-solu} and \eqref{p} is governed by
\begin{equation}\label{mhd-axi}
\begin{cases}
\partial_tu^r + u^r\partial_ru^r + u^z\partial_zu^r
  + \partial_rP = \big(\Delta - \frac{1}{r^2}\big)u^r
  - \frac{(B^\theta)^2}{r},\\[-4mm]\\
\partial_tu^z + u^r\partial_ru^z + u^z\partial_zu^z + \partial_zP  = \Delta u^z,\\[-4mm]\\
\partial_tB^\theta + u^r\partial_rB^\theta + u^z\partial_zB^\theta
  = \big(\Delta - \frac{1}{r^2}\big)B^\theta
  + \frac{u^rB^\theta}{r},
\end{cases}
\end{equation}
together with the incompressible constraint
\begin{equation}\label{incom}
\partial_ru^r + \frac{u^r }{r} + \partial_zu^z = 0.
\end{equation}
To avoid the explicit presence of pressure, we also need the
vorticity formula of \eqref{mhd-axi}:
\begin{equation}\label{vorticity-axi}
\begin{cases}
\partial_tB^\theta + u^r\partial_rB^\theta + u^z\partial_zB^\theta
  = \big(\Delta - \frac{1}{r^2}\big)B^\theta
  + \frac{u^rB^\theta}{r},\\[-4mm]\\
\partial_t\omega^\theta + u^r\partial_r\omega^\theta + u^z\partial_z\omega^\theta
  - \frac{u^r\omega^\theta}{r} = \big(\Delta - \frac{1}{r^2}\big)
  \omega^\theta - \frac{\partial_z(B^\theta)^2}{r}.
\end{cases}
\end{equation}

\begin{remark}\label{connection}
It is well-known that the axially symmetric Navier-Stokes equations
(in the case of ${\bf B} \equiv 0$) are (see, for instance,
\cite{MB})
\begin{equation}\nonumber
\begin{cases}
\partial_tu^r + u^r\partial_ru^r + u^z\partial_zu^r
  + \partial_rp = \big(\Delta - \frac{1}{r^2}\big)u^r
  + \frac{(u^\theta)^2}{r},\\[-4mm]\\
\partial_tu^z + u^r\partial_ru^z + u^z\partial_zu^z + \partial_zp  = \Delta u^z,\\[-4mm]\\
\partial_tu^\theta + u^r\partial_ru^\theta + u^z\partial_zu^\theta
  = \big(\Delta - \frac{1}{r^2}\big)u^\theta
  - \frac{u^ru^\theta}{r}.
\end{cases}
\end{equation}
If we denote ${\bf u} = u^r{\bf e}_r + u^z{\bf e}_z$ and ${\bf b} =
u^\theta {\bf e}_\theta$, we can rewrite the above axially symmetric
Navier-Stokes equations as
\begin{equation}\label{NS-axi}
\begin{cases}
\partial_t{\bf u} + {\bf u}\cdot\nabla {\bf u}
  + \nabla p = \Delta{\bf u}
  - {\bf b}\cdot\nabla {\bf b},\\[-4mm]\\
\partial_t{\bf b} + {\bf u}\cdot\nabla{\bf b}  = \Delta {\bf b} - {\bf b}\cdot\nabla{\bf u},\\[-4mm]\\
\nabla\cdot{\bf u} = \nabla\cdot{\bf b} = 0.
\end{cases}
\end{equation}
If we compare the MHD equations \eqref{RMHD} with the Navier-Stokes
equations \eqref{NS-axi}, we find that we can recover \eqref{RMHD}
from \eqref{NS-axi} by changing the sign of the terms ${\bf
b}\cdot\nabla {\bf b}$ and ${\bf b}\cdot\nabla {\bf u}$.   The
significance of the tiny difference, especially, the sign of ${\bf
b}\cdot\nabla {\bf u}$, yields a much stronger \textit{a priori}
estimate in the MHD case.
\end{remark}

\begin{prop}[Maximum Principle]\label{prop-maxi}
Assume that $({\bf u}, {\bf B}, P)$ is a smooth bounded solution to
\eqref{mhd-axi} with or without resistivity. Then the quantity $\Pi$ satisfies the maximum principle
\begin{equation}\nonumber
\|\Pi(t, \cdot)\|_{L^\infty} \leq \|\Pi(0, \cdot)\|_{L^\infty},\quad
\forall\ t \geq 0.
\end{equation}
\end{prop}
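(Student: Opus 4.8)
The plan is to first derive a clean evolution equation for $\Pi = B^\theta/r$ and then run an $L^p$ energy estimate, letting $p \to \infty$. Substituting $B^\theta = r\Pi$ into the $B^\theta$-equation in \eqref{mhd-axi} and dividing by $r$, I expect the stretching/forcing term $\frac{u^r B^\theta}{r}$ to cancel exactly against the piece of the convection term produced by $\partial_r(r\Pi) = \Pi + r\partial_r\Pi$, while the dissipation $(\Delta - \frac{1}{r^2})(r\Pi)$ should collapse to $r\big(\Delta + \frac{2}{r}\partial_r\big)\Pi$. This yields
\[
\partial_t\Pi + u^r\partial_r\Pi + u^z\partial_z\Pi = \Big(\Delta + \tfrac{2}{r}\partial_r\Big)\Pi
\]
in the resistive case, and the pure transport equation $\partial_t\Pi + u^r\partial_r\Pi + u^z\partial_z\Pi = 0$ in the ideal case. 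The ideal case is then immediate: $\Pi$ is constant along the flow of the divergence-free field $u^r{\bf e}_r + u^z{\bf e}_z$, so $\|\Pi(t,\cdot)\|_{L^\infty}$ is preserved.

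For the resistive case I would multiply the $\Pi$-equation by $|\Pi|^{p-2}\Pi$ and integrate over $\bR^3$ against the natural axisymmetric measure (equal to $2\pi\,r\,dr\,dz$). The convection term contributes $\frac{1}{p}\int (u^r\partial_r + u^z\partial_z)|\Pi|^p\, r\,dr\,dz$, which vanishes after integration by parts precisely because the incompressibility constraint \eqref{incom} gives $\partial_r(ru^r) + \partial_z(ru^z) = 0$. Splitting the dissipation into the Laplacian part and the singular drift, the genuine Laplacian part produces $-(p-1)\int |\Pi|^{p-2}|\nabla\Pi|^2\, r\,dr\,dz \le 0$ by the usual integration by parts for an axisymmetric function on $\bR^3$, while the drift part yields $\frac{2}{p}\int \partial_r|\Pi|^p\, dr\,dz = -\frac{2}{p}\int|\Pi(t,0,z)|^p\,dz \le 0$, a favorable boundary term on the axis. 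Hence $\frac{d}{dt}\int|\Pi|^p\, r\,dr\,dz \le 0$, so $\|\Pi(t,\cdot)\|_{L^p} \le \|\Pi(0,\cdot)\|_{L^p}$ for every finite $p$, and letting $p \to \infty$ gives the claim.

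The main point requiring care is the singular coefficient $\frac{2}{r}\partial_r$ at the axis $r=0$ and the justification of the integrations by parts there. I would use the assumed smoothness and boundedness of $({\bf u},{\bf B})$ — in particular that $B^\theta$ is a smooth component vanishing on the axis, so that $\Pi$ and $\partial_r\Pi$ stay bounded as $r\to 0$ — to ensure the interior boundary term $r\partial_r\Pi\,|\Pi|^{p-2}\Pi$ vanishes at the axis, and that the axis contribution $-\frac{2}{p}\int|\Pi(t,0,z)|^p\,dz$ is legitimate and, crucially, carries the correct non-positive sign; decay at spatial infinity disposes of the remaining boundary terms. An alternative and more transparent route is the pointwise parabolic maximum principle: at an interior spatial maximum of $\Pi$ one has $\partial_r\Pi = \partial_z\Pi = 0$, so the singular drift drops out and the second-order term is $\le 0$, forcing the maximum to be non-increasing in time. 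Making this rigorous on the unbounded domain with the coordinate singularity is exactly where the $L^p$ argument is cleaner, which is why I would prefer it.
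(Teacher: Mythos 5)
Your derivation of the $\Pi$-equation reproduces the paper's: the stretching term $\frac{u^rB^\theta}{r}$ cancels the extra piece of the convection term coming from $\partial_r(r\Pi)$, and $\big(\Delta-\tfrac{1}{r^2}\big)(r\Pi)=r\big(\partial_r^2+\tfrac{3}{r}\partial_r+\partial_z^2\big)\Pi$, which is exactly your $r\big(\Delta+\tfrac{2}{r}\partial_r\big)\Pi$ and the paper's \eqref{Pi-eqn-2}; the ideal case is then handled identically (pure transport along a divergence-free field). Where you genuinely differ is the resistive case. The paper simply reinterprets $\partial_r^2+\tfrac{3}{r}\partial_r+\partial_z^2$ as the Laplacian on $\bR^5$ acting on functions radial in the first four variables, so that $\Pi$ solves a nondegenerate drift-diffusion equation with smooth coefficients and the classical parabolic maximum principle applies directly --- this is precisely the device that rescues the ``pointwise'' route you set aside because of the coordinate singularity at $r=0$. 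You instead run an $L^p$ estimate and let $p\to\infty$; the computation is correct: the convection term dies by $\partial_r(ru^r)+\partial_z(ru^z)=0$, the genuine Laplacian part is dissipative, and the singular drift integrates to the favorable axis term $-\tfrac{2}{p}\int|\Pi(t,0,z)|^p\,dz\le 0$. The two costs of your route are the ones you identify, plus one you half-state: passing from $L^p$ to $L^\infty$ requires $\Pi(t,\cdot)\in L^{p_0}$ for some finite $p_0$, which holds in the setting of the theorems (there $\Pi_0\in L^2\cap L^\infty$ and \eqref{3} shows the $L^2$ norm is nonincreasing) but is strictly more than the ``smooth bounded'' hypothesis under which the proposition is stated; and all the integrations by parts at the axis and at infinity must be justified. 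The five-dimensional viewpoint buys both points for free and is reused later in the paper (Lemma \ref{C-Z} and Lemma \ref{A-p}), so it is the more economical choice, but your argument is complete and correct modulo the mild decay assumption.
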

\begin{proof}
In the case of zero resistivity, by dividing the
equation for $B^\theta$ by $r$, one has
\begin{equation}\label{Pi-eqn-1}
\partial_t\Pi + u^r\partial_r\Pi + u^z\partial_z\Pi
  = 0,
\end{equation}
which gives the maximum principle for $\Pi$ in the case of zero
resistivity.

Similarly, in the resistive case, we have
\begin{equation}\label{Pi-eqn-2}
\partial_t\Pi + u^r\partial_r\Pi + u^z\partial_z\Pi
  = (\partial_r^2 + \frac{3}{r}\partial_r + \partial_z^2)\Pi.
\end{equation}
Then the maximum principle follows by interpreting $(\partial_r^2 + \frac{3}{r}\partial_r + \partial_z^2)$
 as a five-dimensional Laplacian operator.

\end{proof}

\section{Proof of Theorem \ref{thm-iMHD}}

In this section we prove Theorem \ref{thm-iMHD}. Throughout this
paper, we will use $A_1 \lesssim A_2$ to denote that $A_1 \leq
C_0A_2$ and $A_1 \simeq A_2$ to denote that $C_0^{-1}A_2 \leq A_1
\leq C_0A_2$ for a generic positive constant $C_0
> 1$ and two positive quantities $A_1$ and $A_2$.

\begin{proof}[Proof of Theorem \ref{thm-iMHD}]
Let us rewrite the vorticity equation in \eqref{vorticity-axi} in
terms of $\Omega$:
\begin{equation}\nonumber
\partial_t\Omega + u^r\partial_r\Omega + u^z\partial_z\Omega
= (\partial_r^2 + \frac{3}{r}\partial_r + \partial_z^2)\Omega -
\partial_z\Pi^2.
\end{equation}

By taking the $L^2$ inner product of the above equation with
$\Omega$ and preforming the standard energy estimate, one has
\begin{eqnarray}\nonumber
&&\frac{1}{2}\frac{d}{dt}\|\Omega\|_{L^2}^2 -
  \int\Omega(\partial_r^2 + \frac{3}{r}\partial_r +
  \partial_z^2)\Omega dx\\\nonumber
&&= - \frac{1}{2}\int (u^r\partial_r\Omega^2 + u^z\partial_z
  \Omega^2)dx - \int\Omega\partial_z\Pi^2dx.
\end{eqnarray}
Using the incompressibility condition \eqref{incom} and the fact of
$dx = 2\pi rdrdz$, one has
\begin{eqnarray}\nonumber
\int (u^r\partial_r\Omega^2 + u^z\partial_z\Omega^2) dx = 0
\end{eqnarray}
and
\begin{eqnarray}\nonumber
- \int\Omega(\partial_r^2 + \frac{3}{r}\partial_r +
  \partial_z^2)\Omega dx = \|\nabla\Omega\|_{L^2}^2 + 2\pi\int_{\mathbb{R}}|\Omega(t, 0, z)|^2dz.
\end{eqnarray}
By integration by part and interpolation, we have
\begin{eqnarray}\nonumber
\big|\int\Omega\partial_z\Pi^2dx\big| \leq
\|\Pi\|_{L^4}^2\|\partial_z\Omega\|_{L^2} \leq
\frac{1}{2}\|\Pi\|_{L^2}^2\|\Pi\|_{L^\infty}^2 +
\frac{1}{2}\|\partial_z\Omega\|_{L^2}^2.
\end{eqnarray}
Consequently, one has
\begin{eqnarray}\label{3-1}
\frac{d}{dt}\|\Omega\|_{L^2}^2 + \|\nabla\Omega \|_{L^2}^2 \leq
\|\Pi\|_{L^2}^2\|\Pi\|_{L^\infty}^2.
\end{eqnarray}

Similarly, using equation \eqref{Pi-eqn-2} and preforming the $L^2$
energy estimate, one has
\begin{eqnarray}\label{3-2}
\|\Pi(t, \cdot)\|_{L^2} \leq \|\Pi_0\|_{L^2},\quad \forall\ t \geq
0.
\end{eqnarray}
Consequently, by Proposition \ref{prop-maxi} and using \eqref{3-1},
\eqref{3-2}, we have
\begin{equation}\label{3-3}
\|\Omega(t, \cdot)\|_{L^2} \lesssim 1 + \sqrt{t},\quad
\int_0^t\|\nabla\Omega\|_{L^2}^2dt \lesssim 1 + t, \quad \forall\ t
\geq 0.
\end{equation}
Here we used that $\Omega_0 \in L^2$ which is due to the fact that
${\bf u}_0 \in H^2$ and
\begin{equation}\nonumber
\big|\nabla(\nabla\times {\bf u})\big|^2 = \big|({\bf e}_r\partial_r
+ \frac{1}{r}{\bf e}_\theta\partial_\theta + {\bf
e}_z\partial_z)\omega^\theta {\bf e}_\theta\big|^2 =
|\nabla\omega^\theta|^2 + |\Omega|^2.
\end{equation}
Similarly, one also has $\Pi_0 \in L^2$ since ${\bf B}_0 \in H^1$.

To proceed, we need a technical lemma regarding the property of a
Riesz operator on $\mathbb{R}^3$. We first recall the following
weighted Calderon-Zygmund inequality for a singular integral
operator with a weight function which is in the $\mathcal{A}_p$
class (see Stein \cite{Stein} pp. 194-217 for details). Let
$\mathcal{K}$ be a Riesz operator in $\mathbb{R}^n$ and $w(x)$ be a
weight in the $\mathcal{A}_p$ class (see page 194 of \cite{Stein}
for definition). One can extend the Calderon-Zygmund inequality for
the singular integral operator with the integral having weight
function $w(x)$. Specifically, for $1 < p < \infty$, there holds
\begin{equation}\nonumber
\|\mathcal{K}f\|_{L^p(\mathbb{R}^n)} \lesssim
\|f\|_{L^p(\mathbb{R}^n)},\quad \forall\ \ f \in L^p(\mathbb{R}^n).
\end{equation}
The following lemma plays an essential role in our global regularity
analysis.
\begin{lem}\label{C-Z}
There holds
\begin{equation}\nonumber
\int_0^T\|r^{-1}u^r(t, \cdot)\|_{L^\infty}dt \lesssim \sup_{0 \leq t
\leq T}\|\Omega(t,
\cdot)\|_{L^2}^{\frac{1}{2}}\int_0^T\|\nabla\Omega(t,
\cdot)\|_{L^2}^{\frac{1}{2}}dt.
\end{equation}
\end{lem}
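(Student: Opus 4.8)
The plan is to represent $u^r/r$ as a Riesz operator of order $-1$ acting on $\Omega$ over a five–dimensional space, and then to close the estimate by interpolation. Since $\mathbf u$ is axially symmetric, divergence free and swirl free, it admits a stream function $\psi$ with $u^r=-\partial_z\psi$, $u^z=\frac1r\partial_r(r\psi)$, and $\omega^\theta=-(\Delta-\frac1{r^2})\psi$, where $\Delta=\partial_r^2+\frac1r\partial_r+\partial_z^2$. Setting $\phi=\psi/r$, a direct computation gives $(\Delta-\frac1{r^2})(r\phi)=r(\partial_r^2+\frac3r\partial_r+\partial_z^2)\phi$. Reading $\partial_r^2+\frac3r\partial_r+\partial_z^2$ as the five–dimensional Laplacian $\Delta_5$ acting on functions of $(r,z)$, exactly as in the proof of Proposition \ref{prop-maxi}, we obtain $\Omega=\omega^\theta/r=-\Delta_5\phi$ and therefore
\[
\frac{u^r}{r}=-\partial_z\phi=-\partial_z(-\Delta_5)^{-1}\Omega .
\]

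Next, observing that $u^r/r$ is an axially symmetric scalar whose $L^\infty(\mathbb R^3)$ norm is just its supremum in $(r,z)$, I would apply the three–dimensional Gagliardo–Nirenberg (Agmon) inequality
\[
\Big\|\frac{u^r}{r}\Big\|_{L^\infty}\lesssim\Big\|\nabla\frac{u^r}{r}\Big\|_{L^2}^{1/2}\,\Big\|\nabla^2\frac{u^r}{r}\Big\|_{L^2}^{1/2}.
\]
Once we prove the two elliptic estimates
\[
\Big\|\nabla\frac{u^r}{r}\Big\|_{L^2}\lesssim\|\Omega\|_{L^2},\qquad
\Big\|\nabla^2\frac{u^r}{r}\Big\|_{L^2}\lesssim\|\nabla\Omega\|_{L^2},
\]
we get $\|u^r/r\|_{L^\infty}\lesssim\|\Omega\|_{L^2}^{1/2}\|\nabla\Omega\|_{L^2}^{1/2}$ pointwise in time; integrating in $t$ and pulling $\sup_{0\le t\le T}\|\Omega\|_{L^2}^{1/2}$ out of the integral then yields the lemma. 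From the representation above one computes $\partial_i(u^r/r)=-R_iR_z\Omega$ and $\partial_i\partial_j(u^r/r)=-R_iR_j\,\partial_z\Omega$, where $R_k=\partial_k(-\Delta_5)^{-1/2}$ are the Riesz transforms on $\mathbb R^5$; so every component of $\nabla(u^r/r)$, resp.\ $\nabla^2(u^r/r)$, is a composition of Riesz transforms applied to $\Omega$, resp.\ to $\partial_z\Omega$.

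The two elliptic estimates are then obtained from the weighted Calderón–Zygmund inequality recalled above, after matching the measures. Writing a point of $\mathbb R^5$ as $x_5=(y,z)$ with $y\in\mathbb R^4$ and $r=|y|$, for any axially symmetric scalar $f=f(r,z)$ one has $\|f\|_{L^2(\mathbb R^3)}^2\simeq\int_{\mathbb R^5}|y|^{-2}|f|^2\,dx_5$, since $dx=2\pi r\,dr\,dz$ while $dx_5\simeq r^3\,dr\,dz$. Hence the elliptic estimates are equivalent to the weighted bounds $\|R_iR_z\Omega\|_{L^2(w)}\lesssim\|\Omega\|_{L^2(w)}$ and $\|R_iR_j\partial_z\Omega\|_{L^2(w)}\lesssim\|\partial_z\Omega\|_{L^2(w)}$ with weight $w=|y|^{-2}$, and the right–hand sides convert back to $\|\Omega\|_{L^2(\mathbb R^3)}$ and $\|\partial_z\Omega\|_{L^2(\mathbb R^3)}\le\|\nabla\Omega\|_{L^2(\mathbb R^3)}$.

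The heart of the argument, and the step I expect to be the main obstacle, is justifying that this last appeal to the weighted Calderón–Zygmund inequality is legitimate. Concretely, one must verify that $w=|y|^{-2}$, a power of the distance to the $4$–plane $\{y=0\}$, belongs to the Muckenhoupt class $\mathcal A_2(\mathbb R^5)$ — here the admissible range of exponents is $-4<\alpha<4$, and $\alpha=-2$ is comfortably inside — and one must check that the formal operator identities $\partial_i(u^r/r)=-R_iR_z\Omega$ hold globally for honest, sufficiently decaying solutions, in particular across the symmetry axis $r=0$, where the reduction to $\Delta_5$ and the inversion $\phi=(-\Delta_5)^{-1}\Omega$ are performed.
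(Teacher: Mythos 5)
Your proposal is correct and follows essentially the same route as the paper: reduce to the stream function $\phi=\psi^\theta/r$ solving $-\Delta_5\phi=\Omega$ with $\Delta_5=\partial_r^2+\frac{3}{r}\partial_r+\partial_z^2$ read as a five-dimensional Laplacian, obtain the two elliptic bounds via the weighted Calder\'on--Zygmund inequality with the $\mathcal{A}_2(\mathbb{R}^5)$ weight $r^{-2}$ (the paper's Lemma \ref{A-p}), and conclude by the $\mathbb{R}^3$ Agmon interpolation $\|f\|_{L^\infty}\lesssim\|\nabla f\|_{L^2}^{1/2}\|\nabla^2 f\|_{L^2}^{1/2}$. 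The only point you state rather than verify --- that the $L^2$ norms of the three-dimensional and five-dimensional Hessians of an axially symmetric scalar are comparable, so that the Riesz-transform bound in $\mathbb{R}^5$ really controls $\nabla^2(u^r/r)$ in $\mathbb{R}^3$ --- is exactly the computation the paper carries out in \eqref{S7-1}--\eqref{S7-2}.
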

\begin{rem}\nonumber
We pointed out that in \cite{HLL} the authors have established an
inequality $\|r^{-1}\partial_zu^r\|_{L^p} \lesssim \|\Omega\|_{L^p}$
for $1 < p < \infty$ for $\mathbb{R}^2 \times \mathbb{T}^1$, where
$\mathbb{T}^1$ is a one-dimensional torus.
\end{rem}
\begin{proof}
We follow the proof in \cite{HLL}. By the incompressible constraint
\eqref{incom}, we can introduce the angular stream function
$\psi^\theta$ such that
\begin{equation}\label{3-4}
- \big(\partial_r^2 + \frac{1}{r}\partial_r - \frac{1}{r^2} +
\partial_z^2\big)\psi^\theta = \omega^\theta,
\end{equation}
and
\begin{equation}\nonumber
u^r = - \partial_z\psi^\theta,\quad u^z =
\frac{1}{r}\partial_r(r\psi^\theta).
\end{equation}
We divide by $r$ in \eqref{3-4}, which gives that
\begin{equation}\label{3-5}
- \big(\partial_r^2 + \frac{3}{r}\partial_r +
\partial_z^2\big)\frac{\psi^\theta}{r} = \frac{\omega^\theta}{r}.
\end{equation}

Following \cite{HLL}, we interpret the Laplace operator in
\eqref{3-5} as a five-dimensional one. We formally write
$$y = (y_1, y_2, y_3, y_4, z),\quad r = \sqrt{y_1^2 + y_2^2 + y_3^2 + y_4^2},
\quad \Delta_y = \big(\partial_r^2 + \frac{3}{r}\partial_r +
\partial_z^2\big).$$
This way we have $\frac{\psi^\theta}{r} =
(-\Delta_y)^{-1}\frac{\omega^\theta}{r}.$ In the remaining part of
the proof of this lemma, we will use a subscript $y$ to denote the
derivatives with respect to $y$.

It is clear that
\begin{eqnarray}\nonumber
\nabla^2\frac{\psi^\theta}{r} &=& ({\bf e}_r\partial_r +
  \frac{1}{r}{\bf e}_\theta\partial_\theta +
  {\bf e}_z\partial_z)\big({\bf e}_r\partial_r\frac{\psi^\theta}{r}\big) +
  \nabla\partial_z\frac{\psi^\theta}{r}\otimes {\bf e}_z\\\nonumber
&=& {\bf e}_r\otimes {\bf e}_r\partial_r^2\frac{\psi^\theta}{r} +
  {\bf e}_\theta\otimes {\bf e}_\theta\frac{1}{r}\partial_r\frac{\psi^\theta}{r}
  + ({\bf e}_z\otimes {\bf e}_r + {\bf e}_r\otimes{\bf e}_z)\partial_{zr}^2\frac{\psi^\theta}{r} +
  {\bf e}_z\otimes {\bf e}_z\partial_z^2\frac{\psi^\theta}{r}.
\end{eqnarray}
Consequently, one has
\begin{eqnarray}\label{S7-1}
\big|\nabla^2\frac{\psi^\theta}{r}\big|^2 \simeq
\big|\partial_r^2\frac{\psi^\theta}{r}\big|^2 +
\big|\frac{1}{r}\partial_r\frac{\psi^\theta}{r}\big|^2 +
\big|\partial_z^2\frac{\psi^\theta}{r}\big|^2 +
\big|\partial_{rz}^2\frac{\psi^\theta}{r}\big|^2.
\end{eqnarray}
On the other hand, one also has
\begin{eqnarray}\nonumber
\nabla_y^2\frac{\psi^\theta}{r} &=& (\widetilde{{\bf e}}_r\partial_r
  + \nabla_\theta + \widetilde{{\bf e}}_z\partial_z)\big(\widetilde{{\bf e}}_r
  \partial_r\frac{\psi^\theta}{r}\big) +
  \nabla_y\partial_z\frac{\psi^\theta}{r}\otimes \widetilde{{\bf e}}_z\\\nonumber
&=& \widetilde{{\bf e}}_r\otimes \widetilde{{\bf
  e}}_r\partial_r^2\frac{\psi^\theta}{r} +
  \nabla_\theta \widetilde{{\bf e}}_r\partial_r\frac{\psi^\theta}{r}
  + (\widetilde{{\bf e}}_z\otimes \widetilde{{\bf e}}_r + \widetilde{{\bf e}}_r
  \otimes \widetilde{{\bf e}}_z)\partial_{zr}^2\frac{\psi^\theta}{r} +
  \widetilde{{\bf e}}_z\otimes \widetilde{{\bf e}}_z\partial_z^2\frac{\psi^\theta}{r}\\\nonumber
&=& \widetilde{{\bf e}}_r\otimes \widetilde{{\bf e}}_r
  \partial_r^2\frac{\psi^\theta}{r} + (I_0 - \widetilde{{\bf e}}_r\otimes
  \widetilde{{\bf
  e}}_r)\frac{1}{r}\partial_r\frac{\psi^\theta}{r}\\\nonumber
&&  +\ (\widetilde{{\bf e}}_z\otimes \widetilde{{\bf e}}_r +
\widetilde{{\bf e}}_r
  \otimes \widetilde{{\bf e}}_z) \partial_{zr}^2\frac{\psi^\theta}{r} +
  \widetilde{{\bf e}}_z\otimes \widetilde{{\bf e}}_z\partial_z^2\frac{\psi^\theta}{r}.
\end{eqnarray}
where $I_0 = \begin{pmatrix}I_{4\times 4} & 0 \\ 0 & 0\end{pmatrix}$
and $\nabla_\theta$ is defined by
\begin{equation}\nonumber
\nabla_\theta = \nabla - \widetilde{{\bf e}}_r(\widetilde{{\bf
e}}_r\cdot\nabla_y) - \widetilde{{\bf e}}_z\partial_z,\quad
\widetilde{{\bf e}}_r = \frac{1}{r}\begin{pmatrix}y_1\\ y_2\\ y_3\\ y_4\\
0\end{pmatrix},
\widetilde{{\bf e}}_z = \begin{pmatrix}0\\ 0\\ 0\\ 0\\
1\end{pmatrix}.
\end{equation}
Clearly, $\widetilde{{\bf e}}_r\otimes \widetilde{{\bf e}}_r$, $I_0
- \widetilde{{\bf e}}_r\otimes \widetilde{{\bf e}}_r$,
$\widetilde{{\bf e}}_z\otimes \widetilde{{\bf e}}_r$,
$\widetilde{{\bf e}}_r\otimes \widetilde{{\bf e}}_z$ and
$\widetilde{{\bf e}}_z\otimes \widetilde{{\bf e}}_z$ are all
mutually orthogonal. Consequently, one also has
\begin{equation}\label{S7-2}
\big|\nabla_y^2\frac{\psi^\theta}{r}\big|^2 \simeq
\big|\partial_r^2\frac{\psi^\theta}{r}\big|^2 +
\big|\frac{1}{r}\partial_r\frac{\psi^\theta}{r}\big|^2 +
\big|\partial_z^2\frac{\psi^\theta}{r}\big|^2 +
\big|\partial_{rz}^2\frac{\psi^\theta}{r}\big|^2.
\end{equation}
By \eqref{S7-1} and \eqref{S7-2}, we have
\begin{eqnarray}\nonumber
\int\big|\nabla^2\frac{\psi^\theta}{r}\big|^pdx &\simeq&
  \int_{-\infty}^\infty\int_0^\infty\Big(\big|\partial_r^2
  \frac{\psi^\theta}{r}\big|^2 + \big|\frac{1}{r}\partial_r\frac{\psi^\theta
  }{r}\big|^2 + \big|\partial_z^2\frac{\psi^\theta}{r}\big|^2 +
  \big|\partial_{rz}^2\frac{\psi^\theta}{r}\big|^2\Big)^{\frac{p}{2}}rdrdz\\\nonumber
&=& \int_{-\infty}^\infty\int_0^\infty\Big(\big|\partial_r^2
  \frac{\psi^\theta}{r}\big|^2 + \big|\frac{1}{r}\partial_r\frac{\psi^\theta
  }{r}\big|^2 + \big|\partial_z^2\frac{\psi^\theta}{r}\big|^2 +
  \big|\partial_{rz}^2\frac{\psi^\theta}{r}\big|^2\Big)^{\frac{p}{2}}w(r)r^3drdz\\\nonumber
&\simeq& \int_{-\infty}^\infty\int_0^\infty\big|\nabla_y^2
  \frac{\psi^\theta}{r}\big|^pw(r)r^3drdz\\\nonumber
&\simeq& \int\big|\nabla_y^2(-\Delta_y)^{-1}
  \frac{\omega^\theta}{r}\big|^pw(r)dy,
\end{eqnarray}
where $w(r)$ is a weight function $w(r) = \frac{1}{r^2}$.

Let $1 < p < \infty$. Using Lemma 2 in \cite{HLL} (see also a
general version in Lemma \ref{A-p} in Appendix of this paper), we
have
\begin{eqnarray}\nonumber
\int\big|\nabla_y^2(-\Delta_y)^{-1}
  \frac{\omega^\theta}{r}\big|^pw(r)dy &\lesssim& \int\big|
  \frac{\omega^\theta}{r}\big|^pw(r)dy\\\nonumber
&\simeq& \int\big|\frac{\omega^\theta}{r}\big|^pdx.
\end{eqnarray}
Consequently, one has
\begin{equation}\label{3-6}
\int\big|\nabla^2\frac{\psi^\theta}{r}\big|^pdx \lesssim
\int\big|\frac{\omega^\theta}{r}\big|^pdx.
\end{equation}
Repeating the above procedure, one also has
\begin{equation}\label{3-7}
\int\big|\nabla^2\frac{\partial_z\psi^\theta}{r}\big|^pdx \lesssim
\int\big|\frac{\partial_z\omega^\theta}{r}\big|^pdx.
\end{equation}

Taking $p = 2$ in \eqref{3-6} and \eqref{3-7} and using the
interpolation inequality $\|f\|_{L^\infty} \lesssim \|\nabla
f\|_{L^2}^{\frac{1}{2}}\|\nabla^2f\|_{L^2}^{\frac{1}{2}}$ in
$\mathbb{R}^3$, one has
\begin{eqnarray}\nonumber
\int_0^T\big\|r^{-1}u^r(t, \cdot)\big\|_{L^\infty}dt &=&
  \int_0^T\big\|r^{-1}\partial_z\psi^\theta(t,
  \cdot)\big\|_{L^\infty}dt\\\nonumber
&\lesssim& \int_0^T\big\|\nabla\partial_z(r^{-1}\psi^\theta(t,
  \cdot))\big\|_{L^2}^{\frac{1}{2}}\big\|\nabla^2\partial_z(r^{-1}\psi^\theta(t,
  \cdot))\big\|_{L^2}^{\frac{1}{2}}dt\\\nonumber
&\lesssim& \sup_{0 \leq t \leq T}\|\Omega(t,
  \cdot))\|_{L^2}^{\frac{1}{2}}\int_0^T\|\partial_z\Omega(t,
  \cdot)\|_{L^2}^{\frac{1}{2}}dt.
\end{eqnarray}
This finishes the proof of the lemma.
\end{proof}

Now we derive an $L^\infty$ estimate for $B^\theta$. Ignoring the
viscosity in the equation of $B^\theta$ in \eqref{mhd-axi}, one has
\begin{eqnarray}\nonumber
\|B^\theta(t, \cdot)\|_{L^\infty} &\leq& \|B^\theta_0\|_{L^\infty} +
  \int_0^t\|B^\theta(s, \cdot)\|_{L^\infty}\big\|\frac{u^r}{r}\big\|_{L^\infty}ds.
\end{eqnarray}
By Gronwall's inequality and using \eqref{3-3} and Lemma \ref{C-Z},
we have
\begin{eqnarray}\label{9}
\|B^\theta(t, \cdot)\|_{L^\infty} \leq
\|B^\theta_0\|_{L^\infty}e^{\int_0^t\|r^{-1}u^r(s,
\cdot)\|_{L^\infty}ds} \lesssim e^{t^{\frac{5}{4}}}.
\end{eqnarray}

Let us coming back to \eqref{vorticity-axi} and estimate that
\begin{eqnarray}\nonumber
&&\frac{1}{2}\frac{d}{dt}\int|\omega^\theta|^2dx +
  \int\big(|\nabla \omega^\theta|^2 +
  \frac{|\omega^\theta|^2}{r^2}\big)dx\\\nonumber
&&\leq \big\|\frac{u^r}{r}\big\|_{L^\infty}\int(\omega^\theta)^2dx
  + \|\Pi\|_{L^\infty}\|B^\theta\|_{L^2}\|\partial_z\omega^\theta\|_{L^2}\\\nonumber
&&\leq \big\|\frac{u^r}{r}\big\|_{L^\infty}\int(\omega^\theta)^2dx
  + \frac{1}{2}\|\Pi\|_{L^\infty}^2\|B^\theta\|_{L^2}^2
  + \frac{1}{2}\|\partial_z\omega^\theta\|_{L^2}^2.
\end{eqnarray}
Recalling the following basic energy law
\begin{equation}\label{EnergyL}
\frac{1}{2}\frac{d}{dt}\big(\|{\bf u}\|_{L^2}^2 + \|{\bf
B}\|_{L^2}^2\big) + \int_0^t\big\|\nabla{\bf u}\|_{L^2}^2ds = 0,
\end{equation}
and using the \textit{a priori} estimate in \eqref{3-3} and Lemma
\ref{C-Z}, one has
\begin{equation}\label{10}
\|\nabla\times {\bf u}(t, \cdot)\|_{L^2} \lesssim
e^{t^{\frac{5}{4}}},\quad \int_0^t\|\nabla(\nabla\times {\bf
u})\|_{L^2}^2dt \lesssim e^{t^{\frac{5}{4}}}, \quad \forall\ t \geq
0.
\end{equation}

The next step is to bootstrap the regularity of ${\bf u}$ and ${\bf
B}$. We are going to show the $L^1([0, T], {\rm Lip}(\mathbb{R}^3))$
estimate of ${\bf u}$. We will make use of the structure of the
ideal MHD in \eqref{mhd-axi} to avoid some possible technical
complications. The key observation is that we can write the
vorticity equation as
\begin{equation}\nonumber
\partial_t(\nabla\times {\bf u}) + \nabla\times[(\nabla\times {\bf u})\times
  {\bf u}] = \Delta(\nabla\times {\bf u})
  - \partial_z(\Pi B^\theta e_\theta).
\end{equation}
Here by the maximum principle in Proposition \ref{prop-maxi} and
\eqref{9}, one has $\Pi B^\theta \in L^\infty([0, t],
L^\infty(\mathbb{R}^3))$. Moreover, we can apply \eqref{10} to
bootstrap the regularity of $(\nabla\times {\bf u})\times {\bf u}$.
Then we may apply the standard parabolic estimate to get the
$L^1([0, t], L^\infty(\mathbb{R}^3))$ estimate for
$\nabla\times {\bf u}$.

We first perform $L^4$ energy estimate for \eqref{vorticity-axi} and derive
that
\begin{eqnarray}\nonumber
&&\frac{1}{4}\frac{d}{dt}\int|\omega^\theta|^4dx +
  \int\big(|\nabla |\omega^\theta|^2|^2 +
  \frac{|\omega^\theta|^4}{r^2}\big)dx\\\nonumber
&&\leq \big\|\frac{u^r}{r}\big\|_{L^\infty}\int(\omega^\theta)^4dx
  + \|\Pi\|_{L^\infty}\|B^\theta\|_{L^\infty}\|\partial_z|\omega^\theta|^2\|_{L^2}
  \|\omega^\theta\|_{L^2}\\\nonumber
&&\leq \big\|\frac{u^r}{r}\big\|_{L^\infty}\int(\omega^\theta)^4dx
  +  \frac{1}{2}\|\Pi\|_{L^\infty}^2
  \|B^\theta\|_{L^\infty}^2\|\omega^\theta\|_{L^2}^2
  + \frac{1}{2}\|\partial_z|\omega^\theta|\|_{L^2}^2.
\end{eqnarray}
Using the \textit{a priori} estimate in \eqref{3-3} and Lemma
\ref{C-Z}, one has
\begin{eqnarray}\nonumber
\||\omega^\theta|^2\|_{L^\infty([0, t], L^2(\mathbb{R}^3))}^2 +
\|\nabla|\omega^\theta|^2\|_{L^2([0, t], L^2(\mathbb{R}^3))}^2
\lesssim e^{t^{\frac{5}{4}}}.
\end{eqnarray}
By Sobolev imbedding inequality, one has
\begin{eqnarray}\nonumber
\|\omega^\theta\|_{L^\infty([0, t], L^4(\mathbb{R}^3))} +
\|\omega^\theta\|_{L^4([0, t], L^{12}(\mathbb{R}^3))} \lesssim
e^{t^{\frac{5}{4}}}.
\end{eqnarray}
On the other hand, by Sobolev imbedding, one also has
\begin{eqnarray}\nonumber
\|{\bf u}\|_{L^\infty([0, t], L^\infty(\mathbb{R}^3))} \lesssim
\|{\bf u}\|_{L^\infty([0, t], L^2(\mathbb{R}^3))} +
\|\omega^\theta\|_{L^\infty([0, t], L^4(\mathbb{R}^3))} \lesssim
e^{t^{\frac{5}{4}}}.
\end{eqnarray}
Hence, we have
\begin{equation}\nonumber
\|(\nabla\times {\bf u})\times {\bf u}\|_{L^4([0, t],
L^{12}(\mathbb{R}^3))} \lesssim e^{t^{\frac{5}{4}}}.
\end{equation}

Write
\begin{equation}\nonumber
\nabla\times {\bf u} = e^{t\Delta}\nabla\times {\bf u}_0 -
\int_0^te^{(t - s)\Delta}\big(\nabla\times[(\nabla\times {\bf
u})\times{\bf u}] + \partial_z(\Pi B^\theta e_\theta)\big)ds.
\end{equation}
A standard parabolic estimate gives that
\begin{equation}\nonumber
\|\nabla\nabla\times {\bf u}\|_{L^4([0, t], L^{12}(\mathbb{R}^3))}
\lesssim  e^{t^{\frac{5}{4}}}.
\end{equation}
By Sobolev imbedding, we have
\begin{equation}\label{1-2}
\|\nabla {\bf u}\|_{L^4([0, t], L^\infty(\mathbb{R}^3))} \lesssim
e^{t^{\frac{5}{4}}}.
\end{equation}

Now let us derive the  $L^1([0, T], {\rm Lip}(\mathbb{R}^3))$
estimate of ${\bf B}$. We first write
\begin{equation}\nonumber
\partial_t{\bf B} + {\bf u}\cdot\nabla{\bf B} = \frac{u^r}{r}{\bf
B}.
\end{equation}
Applying $\nabla$, one has
\begin{equation}\nonumber
\partial_t\nabla{\bf B} + {\bf u}\cdot\nabla\nabla{\bf B} = - \nabla{\bf u}\cdot\nabla{\bf B}
+  \frac{u^r}{r}\nabla{\bf B} +  \nabla u^r\Pi {\bf e}_\theta +
(\nabla \frac{1}{r})u^r{\bf B}.
\end{equation}
Note that
\begin{equation}\nonumber
(\nabla \frac{1}{r})u^r{\bf B} = - \frac{u^r}{r}\Pi {\bf e}_r,
\end{equation}
one has
\begin{eqnarray}\nonumber
\|\nabla{\bf B}(t, \cdot)\|_{L^\infty} &\lesssim& \|\nabla{\bf
  B}_0\|_{L^\infty} + \int_0^t\big(\|\nabla{\bf u}\|_{L^\infty} +
  \big\|\frac{u^r}{r}\big\|_{L^\infty}\big)\|\nabla{\bf B}(s,
  \cdot)\|_{L^\infty}ds\\\nonumber
&&\quad +\ \int_0^t\big(
  \|\nabla{\bf u}\|_{L^\infty} + \big\|\frac{u^r}{r}
  \big\|_{L^\infty}\big)\|\Pi(s, \cdot)\|_{L^\infty}ds.
\end{eqnarray}
We can use \eqref{3-3}, \eqref{1-2}, Lemma \ref{C-Z} and Gronwall's
inequality to estimate that
\begin{eqnarray}\label{1-3}
\|\nabla{\bf B}(t, \cdot)\|_{L^\infty} \lesssim
e^{e^{t^{\frac{5}{4}}}}.
\end{eqnarray}

The \textit{a priori} estimates \eqref{1-2} and \eqref{1-3} are
enough for the global regularity of the ideal MHD equations
\eqref{iMHD}. Indeed, applying the standard $H^2$ energy estimate,
one has
\begin{eqnarray}\nonumber
&&\frac{1}{2}\frac{d}{dt}\big(\|\nabla^2{\bf u}(t, \cdot)\|_{L^2}^2
  + \|\nabla^2{\bf B}(t, \cdot)\|_{L^2}^2\big) + \|\nabla^3{\bf u}
  (t, \cdot)\|_{L^2}^2\\\nonumber
&&\lesssim \int\big(- \nabla^2{\bf u}\nabla^2({\bf u}\cdot\nabla
  {\bf u}) + \nabla^2{\bf u}\nabla^2({\bf B}\cdot\nabla {\bf
  B})\big) dx\\\nonumber
&&\quad +\ \int\big(- \nabla^2{\bf B}\nabla^2({\bf u}\cdot\nabla
  {\bf B}) + \nabla^2{\bf B}\nabla^2({\bf B}\cdot\nabla {\bf
  u})\big) dx\\\nonumber
&&\lesssim \frac{1}{2}\|\nabla^3{\bf u}(t, \cdot)\|_{L^2}^2
  + \|{\bf u}(t, \cdot)\|_{L^\infty}^2\|\nabla^2{\bf u}
  (t, \cdot)\|_{L^2}^2 + \|{\bf B}(t, \cdot)\|_{L^\infty}^2\|\nabla^2{\bf B}
  (t, \cdot)\|_{L^2}^2\\\nonumber
&&\quad +\ \big(\|\nabla{\bf u}(t, \cdot)\|_{L^\infty} +
  \|\nabla{\bf B}(t, \cdot)\|_{L^\infty}\big)\big(\|\nabla^2{\bf
  B}(t, \cdot)\|_{L^2}^2 + \|\nabla^2{\bf u}
  (t, \cdot)\|_{L^2}^2\big).
\end{eqnarray}
Here we also used the Gagliardo-Nirenberg's inequality $\|\nabla
f\|_{L^4}^2 \lesssim \|f\|_{L^\infty}\|\nabla^2f\|_{L^2}$, the
integration by parts and $\int\nabla^2{\bf B}({\bf u}\cdot\nabla)
  \nabla^2{\bf B}dx = 0$. Consequently, one has
\begin{equation}\nonumber
\begin{cases}
\|\nabla^2 {\bf u}(t, \cdot)\|_{L^2} + \|\nabla^2 {\bf B}(t,
\cdot)\|_{L^2} \lesssim e^{e^{e^{t^{\frac{5}{4}}}}},\\[-4mm]\\
\int_0^t\|\nabla^3{\bf u}\|_{L^2}^2dt \lesssim
e^{e^{e^{t^{\frac{5}{4}}}}},
\end{cases}
\quad \forall\ t \geq 0.
\end{equation}
We finished the proof of Theorem \ref{iMHD}.
\end{proof}

\begin{rem}
The proof of Lemma \ref{C-Z} can also be proved by using the following Biot-Savart law (see
\cite{ShirotaYanagisawa94}):
\begin{equation}\nonumber
|u^r(t, x)| \lesssim \int_{|y - x| \leq 4r}\frac{|\omega^\theta(t,
y)|}{|x - y|^2}dy + r\int_{|y - x| \geq 4r}\frac{|\omega^\theta(t,
y)|}{|x - y|^3}dy,
\end{equation}
which, by
Young's inequality, gives that
\begin{eqnarray}\nonumber
|u^r(t, r, z)| \lesssim r\frac{1}{|x|^2}\ast\Omega \leq
r\big\|\frac{1}{|x|^2}\big\|_{L^{\frac{3}{2},
\infty}}\|\Omega\|_{L^{3,1}} .
\end{eqnarray}
Here $L^{p, q}$ denotes the usual Lorentz norm. Then using the real interpolation
and Sobolev imbedding, one has
\begin{eqnarray}\label{8}
\big|\frac{u^r(t, r, z)}{r}\big| \lesssim \|\Omega\|_{L^{3, 1}} \leq
\|\Omega\|_{L^2}^{\frac{1}{2}}\|\Omega\|_{L^{\frac{1}{2}}}.
\end{eqnarray}
\end{rem}

\section{Proofs of Theorem \ref{thm-RMHD}}

In this section we prove Theorem \ref{thm-RMHD}.

\begin{proof}[Proof of Theorem \ref{thm-RMHD}]
Similarly as in obtaining \eqref{3-1}, one has
\begin{eqnarray}\nonumber
\frac{1}{2}\frac{d}{dt}\|\Omega\|_{L^2}^2 +
  \|\nabla\Omega\|_{L^2}^2
  &\lesssim& \big|\int\Omega\partial_z\Pi^2dx\big|\\\nonumber
&\leq& \|\Pi\|_{L^\infty}^{\frac{1}{3}}\|\Pi\|_{L^{\frac{10}{3}}}
  ^{\frac{5}{3}}\|\partial_z\Omega\|_{L^2}\\\nonumber &\lesssim&
  \|\Pi\|_{L^\infty}^{\frac{1}{3}}\|\Pi\|_{L^2}^{\frac{2}{3}}
  \|\nabla\Pi\|_{L^2}\|\partial_z\Omega\|_{L^2}.
\end{eqnarray}
Consequently, one has
\begin{eqnarray}\label{2}
\frac{d}{dt}\|\Omega\|_{L^2}^2 + \|\nabla\Omega \|_{L^2}^2 \lesssim
\|\Pi\|_{L^\infty}^{\frac{2}{3}}\|\Pi\|_{L^2}^{\frac{4}{3}}\|\nabla\Pi\|_{L^2}^2.
\end{eqnarray}

Applying a similar argument to $\Pi$ equation in \eqref{Pi-eqn-2},
one has
\begin{eqnarray}\label{3}
\frac{d}{dt}\|\Pi\|_{L^2}^2 + \|\nabla\Pi\|_{L^2}^2 \leq 0.
\end{eqnarray}
Clearly, the combination of \eqref{2}, \eqref{3} and the maximum
estimate in Proposition \ref{prop-maxi} gives the following
\textit{a priori} estimate
\begin{equation}\label{4}
\|\Pi(t, \cdot)\|_{L^2} + \|\Omega(t, \cdot)\|_{L^2} \lesssim 1\ \
(\forall\ t \geq 0),\quad \int_0^\infty\big(\|\nabla\Pi\|_{L^2}^2 +
\|\nabla\Omega\|_{L^2}^2\big)dt \lesssim 1.
\end{equation}

Now let us come back to the equation of $\omega^\theta$ in
\eqref{vorticity-axi}. Applying the standard energy estimate, one
has
\begin{eqnarray}\label{1-1}
&&\frac{1}{2}\frac{d}{dt}\int|\omega^\theta|^2dx +
  \int\big(|\nabla \omega^\theta|^2 +
  \frac{|\omega^\theta|^2}{r^2}\big)dx\\\nonumber
&&= \int\frac{u^r(\omega^\theta)^2}{r}dx  - \int
  \frac{\partial_z(B^\theta)^2}{r}\omega^\theta dx.
\end{eqnarray}
Using Sobolev imbedding theorem and interpolation, one has
\begin{eqnarray}\nonumber
\Big|\int\frac{u^r(\omega^\theta)^2}{r}dx\Big| &\leq&
  \|u^r\|_{L^2}\|\Omega\|_{L^6}\|\omega^\theta\|_{L^3}\\\nonumber
&\lesssim& \|u^r\|_{L^2}\|\nabla\Omega\|_{L^2}\|\omega^\theta
  \|_{L^2}^{\frac{1}{2}}\|\nabla\omega^\theta\|_{L^2}^{\frac{1}{2}}\\\nonumber
&\lesssim& \|u^r\|_{L^2}^2\|\nabla\Omega\|_{L^2}^2 + \|\omega^\theta
  \|_{L^2}^2 + \frac{1}{4}\|\nabla\omega^\theta\|_{L^2}^2.
\end{eqnarray}
On the other hand, it is clear that one also has
\begin{eqnarray}\nonumber
\Big|\int\frac{\partial_z(B^\theta)^2}{r}\omega^\theta
  dx\Big| &\leq& \|\Pi\|_{L^\infty}\|B^\theta\|_{L^2}
  \|\partial_z\omega^\theta\|_{L^2}\\\nonumber
&\leq& \|\Pi\|_{L^\infty}^2\|B^\theta\|_{L^2}^2 +
  \frac{1}{4}\|\partial_z\omega^\theta\|_{L^2}.
\end{eqnarray}
Using the \textit{a priori} estimate \eqref{4}, the basic energy law
\eqref{EnergyL} and Proposition \ref{prop-maxi}, we have
\begin{eqnarray}\label{5}
\|\nabla\times {\bf u}(t, \cdot)\|_{L^2} \lesssim 1\ \ (\forall\ t
\geq 0),\quad \int_0^\infty\|\nabla(\nabla\times{\bf
u})\|_{L^2}^2\big)dt \lesssim 1.
\end{eqnarray}

The \textit{a priori} estimate \eqref{5} is enough to get the global
regularity of the resistive MHD \eqref{RMHD}. Indeed, by using the
equation of ${\bf B}$, one can easily verifies that $\nabla{\bf B}$
also satisfies \eqref{5}. We have finished the proof of Theorem
\ref{thm-RMHD}.
\end{proof}

\section{Appendix}

In this appendix we first prove that $w(y) = r^\alpha$ is a
$\mathcal{A}_p$ for Riesz operator in $\mathbb{R}^5$ under $-4 <
\alpha < 4p\big(1 - \frac{1}{p}\big)$. The case of $\alpha = - 2$
has been studied in \cite{HLL}.
\begin{lem}[$\mathcal{A}_p$ Weight]\label{A-p}
Let $1 < p < \infty$ and $w(y) = r^\alpha$, $y \in \mathbb{R}^5$.
Then $w(x)$ is in $\mathcal{A}_p$ class if $-4 < \alpha < 4p\big(1 -
\frac{1}{p}\big)$.
\end{lem}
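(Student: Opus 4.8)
The plan is to verify the Muckenhoupt $\mathcal{A}_p$ condition directly, after first stripping off the translation-invariant $z$-direction so that the problem collapses to a classical power weight in the four transverse variables. Recall that $w \in \mathcal{A}_p$ means
$$[w]_{\mathcal{A}_p} := \sup_{Q}\Big(\frac{1}{|Q|}\int_Q w\,dy\Big)\Big(\frac{1}{|Q|}\int_Q w^{-\frac{1}{p-1}}\,dy\Big)^{p-1} < \infty,$$
where the supremum runs over all axis-parallel cubes $Q \subset \mathbb{R}^5$ (this is equivalent, up to dimensional constants, to the same condition tested on balls). Writing $y = (y', z)$ with $y' = (y_1, y_2, y_3, y_4)$, we have $r = |y'|$ and $w(y) = |y'|^\alpha$, so $w$ is independent of $z$.

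First I would exploit the product structure. Every axis-parallel cube in $\mathbb{R}^5$ has the form $Q = Q' \times I$, where $Q' \subset \mathbb{R}^4$ is a cube and $I \subset \mathbb{R}$ is an interval of the same side length. Since $w$ does not depend on $z$, the $z$-averages cancel:
$$\frac{1}{|Q|}\int_Q w\,dy = \frac{1}{|Q'|}\int_{Q'} |y'|^\alpha\,dy', \qquad \frac{1}{|Q|}\int_Q w^{-\frac{1}{p-1}}\,dy = \frac{1}{|Q'|}\int_{Q'} |y'|^{-\frac{\alpha}{p-1}}\,dy'.$$
Hence $[w]_{\mathcal{A}_p(\mathbb{R}^5)} = [\,|\cdot|^\alpha\,]_{\mathcal{A}_p(\mathbb{R}^4)}$, and the lemma reduces to the classical statement that $|x|^\alpha \in \mathcal{A}_p(\mathbb{R}^4)$ if and only if $-4 < \alpha < 4(p-1)$, noting $4(p-1) = 4p\big(1 - \tfrac{1}{p}\big)$.

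For the reduced problem I would argue as follows. Necessity (and the appearance of the two endpoints) comes from testing on balls $B(0,R)$ centered at the origin: in $\mathbb{R}^4$ one has $\int_{B(0,R)} |x|^\alpha\,dx \simeq R^{4+\alpha}$ precisely when $\alpha > -4$ (the transverse measure contributes $\rho^3\,d\rho$, so integrability near $0$ needs $\alpha + 3 > -1$), and likewise $\int_{B(0,R)} |x|^{-\alpha/(p-1)}\,dx \simeq R^{4 - \alpha/(p-1)}$ precisely when $\alpha < 4(p-1)$; the product of the two averages is then $\simeq 1$, while outside this range one of the integrals diverges and $[w]_{\mathcal{A}_p} = \infty$. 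For sufficiency, assume $-4 < \alpha < 4(p-1)$ and split the balls $B = B(x_0,R)$ into two regimes. If $|x_0| \geq 2R$, then $|x| \simeq |x_0|$ uniformly on $B$, so the two averages are comparable to $|x_0|^\alpha$ and $|x_0|^{-\alpha/(p-1)}$ and their $\mathcal{A}_p$ product is $\simeq 1$. If $|x_0| < 2R$, then $B \subset B(0,3R)$, and enlarging the domain of integration together with the origin-centered estimates above bounds the two averages by $\lesssim R^\alpha$ and $\lesssim R^{-\alpha/(p-1)}$, whose product is again $\lesssim 1$. Taking the supremum yields $[w]_{\mathcal{A}_p} < \infty$.

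The only genuinely delicate point is the sharpness of the two endpoints, and this is exactly where the four-dimensional transverse geometry enters: the conditions $\alpha > -4$ and $\alpha < 4(p-1)$ are forced by the local integrability near $r = 0$ of $r^\alpha$ and of $r^{-\alpha/(p-1)}$ against the measure $r^3\,dr$. Everything else is the standard near/far splitting, and the reduction in the first step guarantees that the extra $z$-variable contributes nothing, so no power of the ambient dimension $5$ appears in the endpoint conditions.
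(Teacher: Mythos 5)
Your proof is correct and follows essentially the same route as the paper's: a near/far dichotomy according to whether the ball (or cube) is far from or close to the axis $r=0$, with the two endpoint conditions $\alpha > -4$ and $\alpha < 4(p-1)$ arising exactly as in the paper from the local integrability of $r^{\alpha+3}$ and $r^{-\alpha/(p-1)+3}$ near $r=0$. The only cosmetic difference is that you first factor out the translation-invariant $z$-direction to reduce to the classical power weight $|x|^{\alpha}$ in $\mathcal{A}_p(\mathbb{R}^4)$, whereas the paper keeps the fifth variable and integrates it out explicitly in cylindrical coordinates.
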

\begin{proof}
Recall that a real valued non-negative function $w(x)$ is said to be
in $\mathcal{A}_p( \mathbb{R}^n)$ class  if it satisfies
\begin{equation}\nonumber
\sup_{B \subset \mathbb{R}^n
}\Big(\frac{1}{|B|}\int_Bw(x)dx\Big)\Big(\frac{1}{|B|}\int_Bw(x)^{-
\frac{q}{p}}dx\Big)^{\frac{p}{q}} < \infty.
\end{equation}
Here $p$ and $q$ are conjugate indices with $1 < p < \infty$.

 For
any ball $B \subset \mathbb{R}^5$, denote $B = B(y_0, R)$. It is
easy to see that if $r_0 > 2R$, one has $r \simeq r_0$ for any $x
\in B$. Consequently, for any $\alpha \in \mathbb{R}$, one has
\begin{eqnarray}\nonumber
&&\Big(\frac{1}{|B|}\int_Bw(x)dx\Big)\Big(\frac{1}{|B|}\int_Bw(x)^{-
  \frac{q}{p}}dx\Big)^{\frac{p}{q}}\\\nonumber
&&\lesssim \Big(\frac{1}{|B|}\int_Br_0^\alpha dx\Big)
  \Big(\frac{1}{|B|}\int_Br_0^{- \frac{q}{p}
  \alpha}dx\Big)^{\frac{p}{q}} \lesssim 1.
\end{eqnarray}
On the other hand, if $r_0 \leq 2R$, then for $\alpha + 3 > -1$ and
$- \frac{q\alpha}{p} + 3 > -1$, one has
\begin{eqnarray}\nonumber
&&\Big(\frac{1}{|B|}\int_Bw(x)dx\Big)\Big(\frac{1}{|B|}\int_Bw(x)^{-
  \frac{q}{p}}dx\Big)^{\frac{p}{q}}\\\nonumber
&&\lesssim \Big(\frac{1}{R^5}\int_{r_0 - R}^{r_0 + R}dz\int_{0}^{
  3R}r^{\alpha + 3}dr\Big)\Big(\frac{1}{R^5}\int_{r_0 - R}^{r_0 +
  R}dz\int_{0}^{3R}r^{- \frac{q\alpha}{p} + 3}dr\Big)^{\frac{p}{q}}\\\nonumber
&&\lesssim R^\alpha R^{-\alpha} = 1.
\end{eqnarray}
Noting that the condition on $\alpha$ is $- 4 < \alpha < 4p\big(1 -
\frac{1}{p}\big)$, we in fact have completed the proof of the lemma.
\end{proof}

\section*{Acknowledgement}
Part of this work was done when Zhen Lei was visiting the Courant
Institute of New York University and the Department of Mathematics
of Harvard University during 2012. He would like to thank professor
FangHua Lin, professor Horng-Tzer Yau and professor Shing-Tung Yau
for their interest in this work. The author was also supported by
NSFC (grant No.11171072, 11121101 and 11222107), FANEDD, Innovation Program of Shanghai
Municipal Education Commission (grant No.12ZZ012), Shanghai Talent Development Fund and SGST
09DZ2272900.

\end{document}